\newcommand{\spn}{\mathop{\mathrm{span}}}
\newcommand{\free}[1]{\underset{\scriptscriptstyle #1}{\displaystyle{\ast}}\,}
\title[graph invariants]{Graph theoretic invariants for operator algebras associated to topological dynamics}
\author{Benton L. Duncan}
\address{Department of Mathematics\\
North Dakota State University\\
Fargo, North Dakota\\
USA}
\email{benton.duncan@ndsu.edu}
\subjclass[2000]{47L55, 47L40, 37B99}
\keywords{topological dynamics, nonselfadjoint operator algebras, directed graphs}
\begin{document}

\theoremstyle{plain}
\newtheorem{thm}{Theorem}
\newtheorem{lem}{Lemma}
\newtheorem{prop}{Proposition}
\newtheorem{cor}{Corollary}

\theoremstyle{definition}
\newtheorem{dfn}{Definition}
\newtheorem*{construction}{Construction}
\newtheorem*{example}{Example}

\theoremstyle{remark}
\newtheorem*{conjecture}{Conjecture}
\newtheorem*{acknowledgement}{Acknowledgements}
\newtheorem{rmk}{Remark}

\begin{abstract}
We expand on some invariants used for classifying nonselfadjoint operator algebras.  Specifically to nonselfadjoint operator algebras which have a conditional expectation onto a commutative diagonal we construct an edge-colored directed graph which can be used as an operator algebra invariant.
\end{abstract}

\maketitle
One important aspect of the nonselfadjoint operator algebras of directed graphs is the fact that the directed graph is an isomorphism invariant of the algebra \cite{KatsoulisKribs, Solel}.  More recently the same approach has been used to classify certain nonselfadjoint operator algebras coming from dynamical systems, see for example \cite{DavidsonKatsoulis, DavidsonKatsoulis2, DavidsonRoydor}.  All of these constructions share similar properties.  They all rely on characterizing the maximal ideal space of the algebra, and then using two dimensional nest representations to represent generators for the algebra (the edges for graph algebras and the actions for the dynamical systems).

The success of this approach in certain classes of nonselfadjoint algebras suggests that the commonalities of these approaches may be exploited in other contexts.  We have considered an approach to this idea that can be used for nonselfadjoint algebras which have a conditional expectation onto their commutative diagonal.  This of course includes the classes studied above but also can be used in other contexts (specifically in the edge-colored directed graph algebras introduced in \cite{Duncanedge}).  These other contexts, however, force us to extend the invariants in a new directions.  In the following we consider invariants similar to those studied in the papers cited above.  However we consider both small and significant variants of these to construct from an operator algebra a directed graph which serves as an invariant for the algebra.

It is easy to see that the directed graph (the construction of which mimics that from \cite{KatsoulisKribs}) does not capture all of the information.  Specifically there are contexts where a topology may be considered on the edge set for the directed graph, or a coloring function can be applied to the edge set of the directed graph.  In what follows we considers some of these invariants and how they may be used as operator algebra invariants.

Specifically, we show that associated to a nonselfadjoint operator algebra is a directed graph which is an isomorphism invariant.  The vertex set has a natural topology but the edge set of the directed graph may be topologized in various useful ways.  We show that the set of topologizations of the edge set also form an isomorphism invariant.  Finally we show that a coloring function can be assigned to the edge set and show that the associated edge-colored directed graph is also an isomorphism invariant.  We conclude with an example where the two algebras are isomorphic as Banach algebras but the edge-colored directed graph differentiates the two algebras.

We have aimed to make this paper as self contained as possible, including proofs of some well-known results.  We also wish to emphasize that the motivation of this paper was the commonalities between a series of papers studying various nonselfadjoint operator algebras.  The similarities and differences in approach and results between this paper and those in \cite{DavidsonRoydor} should be noted.  In this paper we have attempted to abstract much of the concepts and ideas from these papers.

To set the scene we will consider abstract operator algebras and look to generalize and build upon a class of invariants for these algebras.  We wish to emphasize that we are focusing on completely isometric isomorphism invariants.  Some of these invariants have proved useful for more general notions of equivalence of algebras (for example see \cite{KatsoulisKribs} and \cite{DavidsonKatsoulis} for some of these invariants in the context of algebraic isomorphism).

We will start by considering the diagonal of an operator algebra, specifically $\Delta(A) = A \cap A^*$ which is the largest $C^*$-subalgebra inside the algebra $A$.

\begin{prop}
Let $A$ and $B$ be operator algebras which are completely isometrically isomorphic, then $\Delta(A)$ and $ \Delta(B)$ are isomorphic as $C^*$-algebras.
\end{prop}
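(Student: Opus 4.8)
The plan is to prove the stronger statement that the given completely isometric isomorphism $\varphi\colon A \to B$ itself restricts to a $*$-isomorphism of $\Delta(A)$ onto $\Delta(B)$, so that the abstract $C^*$-algebra $\Delta(A)$ is recovered from the completely isometric isomorphism class of $A$. The only genuine obstacle is that $\varphi$ is a priori merely multiplicative and completely isometric, and need not preserve adjoints; the heart of the argument is to force adjoint preservation on the diagonal. I would first reduce to the unital case: if $A$ and $B$ lack units, then by the standard fact (due to Meyer) that a completely contractive homomorphism of operator algebras extends completely contractively to the unitization, both $\varphi$ and $\varphi^{-1}$ extend to mutually inverse unital completely isometric isomorphisms $\varphi^{1}\colon A^{1} \to B^{1}$. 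It is precisely here that \emph{complete} isometry, rather than mere isometry, is essential, since the unitization norm is not controlled by isometry alone.

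So assume $A$ and $B$ are unital, represented completely isometrically in $B(H)$ and $B(K)$ with $\varphi(1) = 1$. The key step is to track the unitary group of the $C^*$-algebra $\Delta(A)$. If $u \in \Delta(A)$ is unitary then $u, u^{*} \in A$, and multiplicativity gives $\varphi(u)\varphi(u^{*}) = \varphi(u^{*})\varphi(u) = \varphi(1) = 1$, so $\varphi(u)$ is invertible in $B(K)$ with inverse $\varphi(u^{*})$. Since $\varphi$ is isometric, $\|\varphi(u)\| = \|u\| = 1$ and $\|\varphi(u)^{-1}\| = \|\varphi(u^{*})\| = 1$; an invertible operator with $\|\varphi(u)\| \le 1$ and $\|\varphi(u)^{-1}\| \le 1$ satisfies both $\varphi(u)^{*}\varphi(u) \le 1$ and $\varphi(u)^{*}\varphi(u) \ge 1$, hence is a unitary. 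Therefore $\varphi(u)^{*} = \varphi(u)^{-1} = \varphi(u^{*}) \in B$, so $\varphi(u) \in \Delta(B)$ and $\varphi$ preserves the adjoint of every unitary.

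Because every element of a unital $C^*$-algebra is a linear combination of its unitaries, linearity of $\varphi$ upgrades this to $\varphi(a^{*}) = \varphi(a)^{*}$ for all $a \in \Delta(A)$, whence $\varphi(\Delta(A)) \subseteq \Delta(B)$ and $\varphi|_{\Delta(A)}$ is a $*$-homomorphism. Applying the identical argument to $\varphi^{-1}$ gives $\varphi^{-1}(\Delta(B)) \subseteq \Delta(A)$, so $\varphi$ carries $\Delta(A)$ onto $\Delta(B)$ and $\varphi|_{\Delta(A)}$ is the desired $*$-isomorphism. To descend from the unitizations in the non-unital case, note that $\Delta(A) \subseteq \Delta(A^{1})$ and $\varphi^{1}|_{A} = \varphi$ maps $A$ into $B$; thus for $a \in \Delta(A)$ one has $\varphi(a)^{*} = \varphi^{1}(a^{*}) = \varphi(a^{*}) \in B$, giving $\varphi(a) \in \Delta(B)$, and the symmetric computation with $\varphi^{-1}$ completes the reduction.

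I expect the subtlest bookkeeping to lie exactly in this last descent, where one must be careful that it is $\Delta(A)$, and not the strictly larger $\Delta(A^{1})$, that is carried onto $\Delta(B)$; isolating the identity $\varphi(a^{*}) = \varphi(a)^{*}$ on $\Delta(A^{1})$ and then restricting along $\varphi|_{A}$ resolves this cleanly. A shorter but less self-contained alternative bypasses the unitary computation altogether: invoke the universal property of the $C^*$-envelope to extend $\varphi$ to a $*$-isomorphism $\Phi\colon C^{*}_{e}(A) \to C^{*}_{e}(B)$, and then, since $\Phi$ is bijective and adjoint preserving, conclude $\Phi(\Delta(A)) = \Phi(A \cap A^{*}) = \Phi(A) \cap \Phi(A)^{*} = B \cap B^{*} = \Delta(B)$. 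Given the paper's stated aim of self-containment, I favor the direct unitary argument above.
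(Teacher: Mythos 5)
Your proof is correct, and its overall skeleton matches the paper's: restrict the isomorphism to $\Delta(A)$, show the image lands in $\Delta(B)$, then run the symmetric argument with $\pi^{-1}$ to get surjectivity. Where you genuinely diverge is at the key step. The paper simply invokes, without proof, the standard rigidity fact that a completely isometric algebra isomorphism defined on a $C^*$-algebra is automatically a $*$-isomorphism onto a $C^*$-subalgebra of the codomain (``Since $\Delta(A)$ is a $C^*$-algebra this implies that $\pi|_{\Delta(A)}$ is a $C^*$-algebra isomorphism onto $\pi(\Delta(A))$''), and the rest of its proof is the same two-inclusion bookkeeping you perform. You instead prove that fact from scratch: Meyer's unitization theorem to reduce to the unital case, the observation that an invertible contraction with contractive inverse is unitary so unitaries of $\Delta(A)$ map to unitaries, and the span-of-unitaries decomposition to upgrade to adjoint-preservation, with a careful descent from $\Delta(A^{1})$ back to $\Delta(A)$. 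What your route buys is self-containment (in the spirit of the paper's stated aims) plus a sharper accounting of hypotheses: your argument makes visible that in the unital case a merely isometric (indeed contractive-with-contractive-inverse) isomorphism already suffices, with complete isometry needed only to control the unitization in the non-unital case — a refinement invisible in the paper's citation-based proof. What the paper's route buys is brevity, and your closing $C^{*}$-envelope alternative ($\Phi(A\cap A^{*})=\Phi(A)\cap\Phi(A)^{*}$) is essentially the packaged form of the fact the paper is implicitly citing. One cosmetic note: the element-of-$\Delta(A)$ descent step quietly uses that $a\in\Delta(A)$ forces $a^{*}\in A$, hence $\varphi^{1}(a^{*})=\varphi(a^{*})\in B$; you do use this correctly, but it deserves the explicit mention you give it, since it is exactly what keeps the conclusion inside $\Delta(B)$ rather than $\Delta(B^{1})$.
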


\begin{proof}
Let $\pi: A \rightarrow B$ be a completely isometric isomorphism.  Then $\pi|_{\Delta(A)}: \Delta(A) \rightarrow B$ is a completely isometric isomorphism onto its range $\pi(\Delta(A))$..  Since $\Delta(A)$ is a $C^*$-algebra this implies that $ \pi_{\Delta(A)}$ is a $C^*$-algebra isomorphism onto $\pi(\Delta(A))$.  It follows that $ \pi(\Delta(A))$ is a $C^*$-subalgebra of $B$ and hence $ \pi(\Delta(A)) \subseteq \Delta(B)$. Similarly we have that $\pi^{-1}(\Delta(B)) \subseteq \Delta(A)$.  Since $ \pi( \pi^{-1})|_{\Delta(B)}$ is the identity on $\Delta(B)$ and $ \pi^{-1}( \pi)|_{\Delta(A)}$ is the identity on $\Delta(A)$ it follows that $ \pi_{\Delta(A)}$ is onto $\Delta(B)$.
\end{proof}

Our invariants will focus on directed graphs and for this purpose we begin by defining the vertex set of the graph associated to $A$.  Given an operator algebra we define $V(A)$ to be the set multiplicative linear functionals of $\Delta(A)$.  Of course when $\Delta(A)$ is commutative this is the maximal ideal space of $\Delta(A)$ which completely characterizes $\Delta(A)$.

\begin{prop}
Let $A$ and $B$ be operator algebras which are completely isometrically isomorphic via $\pi$, then there is an equivalence between $V(A)$ and $V(B)$ given by $ \rho \mapsto \rho \circ \pi^{-1}$.  Further this map is weak$*$-weak$*$ continuous.
\end{prop}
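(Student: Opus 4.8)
The plan is to establish three things in turn: that the assignment $\rho \mapsto \rho \circ \pi^{-1}$ really produces an element of $V(B)$, that it is a bijection, and that it is weak$*$-weak$*$ continuous. Well-definedness relies crucially on the preceding proposition, which shows that the completely isometric isomorphism $\pi^{-1} : B \rightarrow A$ carries $\Delta(B)$ into $\Delta(A)$; hence for $\rho \in V(A)$ the composite $\rho \circ \pi^{-1}$ (understood as the restriction of $\pi^{-1}$ to $\Delta(B)$ followed by $\rho$) is a well-defined linear functional on $\Delta(B)$. Because the restriction of $\pi^{-1}$ to $\Delta(B)$ is an algebra homomorphism into $\Delta(A)$ and $\rho$ is multiplicative, the composite is again multiplicative and linear, and so lies in $V(B)$.

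Next I would verify the claimed equivalence by exhibiting an explicit two-sided inverse, namely the analogously defined map $\sigma \mapsto \sigma \circ \pi$ from $V(B)$ to $V(A)$, which is well-defined by the same reasoning applied to $\pi : A \rightarrow B$. Since $\pi^{-1} \circ \pi$ restricts to the identity on $\Delta(A)$ and $\pi \circ \pi^{-1}$ restricts to the identity on $\Delta(B)$, a direct computation shows these two assignments compose to the identity in either order, so $\rho \mapsto \rho \circ \pi^{-1}$ is a bijection.

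For continuity I would use that the weak$*$ topology on $V(B) \subseteq \Delta(B)^*$ is precisely the topology of pointwise evaluation. Taking a net $\rho_\alpha \rightarrow \rho$ converging weak$*$ in $V(A)$ and fixing $b \in \Delta(B)$, I set $a = \pi^{-1}(b) \in \Delta(A)$; then $(\rho_\alpha \circ \pi^{-1})(b) = \rho_\alpha(a) \rightarrow \rho(a) = (\rho \circ \pi^{-1})(b)$ by the definition of weak$*$ convergence. As this holds for every $b$, the images converge weak$*$, which gives the desired continuity. By symmetry the inverse map is continuous as well, so the assignment is in fact a weak$*$-weak$*$ homeomorphism.

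I do not anticipate a genuine obstacle here, since the argument is entirely formal once the earlier proposition guarantees that $\pi$ and $\pi^{-1}$ intertwine the diagonals. The only point demanding care is the bookkeeping of domains: one must restrict each composition to the appropriate diagonal so that every functional is defined on the correct algebra, and invoke the fact that multiplicative linear functionals on a $C^*$-algebra are automatically bounded so that the images are honest elements of the dual.
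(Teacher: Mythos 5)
Your proposal is correct and follows essentially the same route as the paper's own proof: define the map, verify multiplicativity using that $\pi^{-1}$ carries $\Delta(B)$ into $\Delta(A)$, exhibit $\sigma \mapsto \sigma \circ \pi$ as a two-sided inverse, and prove weak$*$ continuity by the identical net argument with $b = \pi(a)$. Your writeup is in fact slightly more careful than the paper's (which contains a typo, concluding convergence of $\rho_\lambda \circ \pi$ rather than $\rho_\lambda \circ \pi^{-1}$, and does not explicitly note that symmetry yields a homeomorphism), but the mathematical content is the same.
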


\begin{proof}
Let $ \rho $ be a multiplicative linear functional for $V(A)$, then $ \rho \circ \pi^{-1}$ is a multiplicative linear functional in $V(B)$.  Similarly, if $ \sigma$ is a multiplicative linear functional in $V(B)$ then $ \sigma \circ \pi$ is a multiplicative linear functional in $V(A)$.  Notice that $ (\rho \circ \pi^{-1}) \circ \pi = \rho $ and $ (\sigma \circ \pi) \circ \pi^{-1} = \sigma$ hence the map $ \rho \mapsto \rho \circ \pi^{-1}$ is one-to-one and onto.

If $ \rho_{\lambda}$ is a convergent net in $V(A)$, converging to $\rho$ in the weak-$*$ topology then $\rho_{\lambda}(a)$ converges to $\rho(a)$ for every $ a \in \Delta(A)$.   Now if $ b \in \Delta(B)$ then $ b = \pi(a)$ for some $a \in \Delta(A)$ so that $ \rho_{\lambda} \circ \pi^{-1}(b) = \rho_{\lambda}(a)$ which converges to $ \rho(a) = \rho \circ \pi^{-1}(b)$ and hence $ \rho_{\lambda} \circ \pi $ converges to $ \rho \circ \pi$ in the weak-$*$ topology in $V(B)$.
\end{proof}

In this way $V(A)$ is a locally compact Hausdorff space.  We will write $ v \in V(A)$ to denote a multiplicative linear functional for $\Delta(A)$.

In what follows we will make an additional hypothesis for the pair $(A, \Delta(A))$.  Specifically we will assume that there is a completely contractive $\varphi: A \rightarrow \Delta(A)$ which is the identity when restricted to $\Delta(A)$, we will denote the existence of such a map by the triple $(A, \Delta(A), \varphi_A)$.  Notice that the map $ \varphi_A$ is unique.

\begin{prop}
Let $A$ and $B$ be operator algebras which are completely isometrically isomorphic via $\pi$ and assume the existence of the triple $(A, \Delta(A), \varphi_A)$, then the map $ \varphi_{\pi(A)} = \pi \circ \varphi_A \circ \pi^{-1}$ gives rise to the triple $(B, \Delta(B), \varphi_{\pi(A)})$.
\end{prop}

\begin{proof}
By definition $ \varphi_{\pi(A)}$ is completely contractive.  Further if $ b \in \Delta(B)$ then $ \pi^{-1}(b) \in \Delta(A)$ so that $ \varphi_A(\pi^{-1}(b)) = \pi^{-1}(b)$ so that $ \varphi_{\pi(A)}(b) = b$.
\end{proof}

We will then say that the triples $(A, \Delta(A), \varphi_A)$ and $ (B, \Delta(B), \varphi_B)$ are completely isometrically isomorphic via $\pi$ if there is a completely isometric isomorphism $ \pi: A \rightarrow B$.

We now consider the edge space for $(A, \Delta(A), \varphi_A)$.  We start by fixing two vertices $ v,w \in V(A)$.  We will then consider completely contractive representations $T: A \rightarrow T_2$ of the form
\[ T(a) = \begin{bmatrix} v \circ \varphi_A(a) & t(a) \\ 0 & v \circ \varphi_A(a) \end{bmatrix} \]
where $t$ is a nonzero linear map.  We call the collection of such representations $T_{w,v}$.  We now consider the subalgebra $K_{w,v} = \cap \{ \ker T: T \in T_{w,v} \}$.  For any pair, $v$ and $w$ we let $E_{w,v} := (\ker \varphi_A) / K_{w,v}$ and  $k_{w,v} = \dim E_{w,v}$.  If we fix a basis for $E_{w,v}$ then we have one edge with source $v$ and range $w$ in the graph for each basis element of $E_{w,v}$.  Then $E(A)$ will be the union of all such edges.

Notice in the preceding construction that $ \Delta(A) \subseteq \ker t$.

\begin{prop}
Let $(A, \Delta(A), \varphi_A)$ and $(B, \Delta(B), \varphi_B)$ be operator algebras which are completely isometrically isomorphic via $\pi$, then for any pair $v,w \in V(A)$ there is an isomorphism between $E_{w,v}$ and $E_{\pi(w), \pi(v)}$ as vector spaces over $\mathbb{C}$.
\end{prop}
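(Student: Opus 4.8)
The plan is to show that $\pi$ itself, suitably interpreted, carries the entire construction for $(A,\Delta(A),\varphi_A)$ at the pair $(v,w)$ onto the construction for $(B,\Delta(B),\varphi_B)$ at the image pair $\bigl(\pi(v),\pi(w)\bigr) = (v\circ\pi^{-1},\,w\circ\pi^{-1})$, where the vertex map is the one furnished by the earlier proposition identifying the induced equivalence $\rho\mapsto\rho\circ\pi^{-1}$. Concretely, I would first establish that precomposition with $\pi^{-1}$ gives a bijection $T\mapsto T\circ\pi^{-1}$ from $T_{w,v}$ onto $T_{\pi(w),\pi(v)}$, with inverse $S\mapsto S\circ\pi$. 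Since $\pi^{-1}$ is a completely isometric isomorphism, $T\circ\pi^{-1}$ is completely contractive whenever $T$ is, so the only thing to check is that $T\circ\pi^{-1}$ again has the prescribed upper-triangular form, with the correct diagonal characters and a nonzero off-diagonal part.

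The heart of the argument is this form check, and it is where I expect the one genuine (if small) obstacle to lie: one must see that composing with $\pi^{-1}$ converts the diagonal characters built from $\varphi_A$ into the corresponding characters built from $\varphi_B$. Here I would invoke the proposition showing that $\pi\circ\varphi_A\circ\pi^{-1}$ is an expectation for $B$; by the stated uniqueness of such a map this forces $\varphi_B = \pi\circ\varphi_A\circ\pi^{-1}$, and hence the identity $\varphi_A\circ\pi^{-1} = \pi^{-1}\circ\varphi_B$. Consequently, for a vertex $u\in\{v,w\}$,
\[ (u\circ\varphi_A)\circ\pi^{-1} = u\circ(\varphi_A\circ\pi^{-1}) = u\circ(\pi^{-1}\circ\varphi_B) = (u\circ\pi^{-1})\circ\varphi_B, \]
so each diagonal entry of $T\circ\pi^{-1}$ is exactly the character $\pi(u)\circ\varphi_B$ attached to the image vertex $\pi(u) = u\circ\pi^{-1}$. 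The off-diagonal entry of $T\circ\pi^{-1}$ is $t\circ\pi^{-1}$, which is linear and, since $\pi^{-1}$ is a bijection, nonzero precisely when $t$ is. This shows $T\circ\pi^{-1}\in T_{\pi(w),\pi(v)}$, and the same computation run with $\pi$ in place of $\pi^{-1}$ shows the inverse map lands back in $T_{w,v}$.

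It then remains to transport the kernels. Because $\pi$ is a bijection, $\ker(T\circ\pi^{-1}) = \pi(\ker T)$, and intersecting over $T\in T_{w,v}$ (using that a bijection commutes with intersections of subsets) yields $K_{\pi(w),\pi(v)} = \pi(K_{w,v})$. The same identity $\varphi_A\circ\pi^{-1} = \pi^{-1}\circ\varphi_B$ gives $\ker\varphi_B = \pi(\ker\varphi_A)$. Thus the linear bijection $\pi$ maps $\ker\varphi_A$ onto $\ker\varphi_B$ while carrying $K_{w,v}$ onto $K_{\pi(w),\pi(v)}$, so it descends to a well-defined linear isomorphism between the quotients $E_{w,v} = (\ker\varphi_A)/K_{w,v}$ and $E_{\pi(w),\pi(v)} = (\ker\varphi_B)/K_{\pi(w),\pi(v)}$. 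In particular the dimensions $k_{w,v}$ and $k_{\pi(w),\pi(v)}$ agree. The remaining verifications are the routine bookkeeping of passing to quotients by a bijection, leaving the form check of the second paragraph as the only substantive step.
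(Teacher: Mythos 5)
Your proposal is correct, and its germ --- transporting the whole construction along the isomorphism by precomposing representations with $\pi^{-1}$ --- is exactly the paper's idea. But you carry it out on the other side of a duality, and more completely. The paper's proof stays entirely at the level of the representation families: it observes that $T \mapsto T \circ \pi^{-1}$ is a bijection from $T_{w,v}$ onto $T_{\pi(w),\pi(v)}$ and then asserts that this map is ``a linear map between $E_{w,v}$ and $E_{\pi(w),\pi(v)}$'', which is a conflation, since $E_{w,v} = (\ker \varphi_A)/K_{w,v}$ is a quotient of the algebra while the $T$'s (or rather their off-diagonal functionals $t$) naturally live in its dual; the passage from the bijection of representations to the stated vector-space isomorphism is left implicit. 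You instead make that passage explicit and work on the primal side: from the bijection of representation families you extract $K_{\pi(w),\pi(v)} = \pi(K_{w,v})$ and $\ker \varphi_B = \pi(\ker \varphi_A)$, so that $\pi$ itself descends to a linear isomorphism of the quotients, directly and in any dimension. You also supply the one verification the paper omits entirely, namely that composing with $\pi^{-1}$ converts the diagonal characters correctly; your derivation of $\varphi_B = \pi \circ \varphi_A \circ \pi^{-1}$ from the uniqueness of the expectation (via the earlier proposition on $\varphi_{\pi(A)}$), and hence $\varphi_A \circ \pi^{-1} = \pi^{-1} \circ \varphi_B$, is genuinely needed for the form check and is the right way to get it. In short: same transport idea, but your version yields a concrete basis-free isomorphism $E_{w,v} \to E_{\pi(w),\pi(v)}$ induced by $\pi$, whereas the paper's argument, read literally, only identifies the representation spaces and would still owe the reader the descent to the quotients that you perform.
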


\begin{proof}
Notice that if $ T: A \rightarrow T_2$ connecting $w$ and $v$ is given by then $ T \circ \rho^{-1}: B \rightarrow T_2$ connects $\pi(w)$ and $\pi(v)$.  Since $T$ and $ \rho^{-1}$ are linear the map $T \mapsto T \circ \rho^{-1})$ is a linear map between $E_{w,v}$ and $ E_{\pi(w). \pi(v)}$.  Also $T \circ \rho^{-1} \circ \rho = T$ we have that this map is one-to-one.  Surjection follows by constructing the reverse map.
\end{proof}

The corollary is now immediate.

\begin{cor}
Let $(A, \Delta(A), \varphi_A)$ and $(B, \Delta(B), \varphi_B)$ be operator algebras which are completely isometrically isomorphic via $\pi$ then there is an equivalence between $E(A)$ and $E(B)$.
\end{cor}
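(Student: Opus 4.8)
The plan is to assemble the vertex-level equivalence from the second proposition with the edge-level isomorphism from the immediately preceding proposition. Recall that $E(A)$ is, by construction, the disjoint union over all ordered pairs $(w,v) \in V(A) \times V(A)$ of a chosen basis for the vector space $E_{w,v}$, so an equivalence between $E(A)$ and $E(B)$ is really a bookkeeping statement: it records that the edges of $A$ can be matched with the edges of $B$ in a way that respects their source and range vertices. Thus the proof should reduce to stitching together the two maps already established.

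First I would invoke the earlier proposition to obtain the weak$*$-homeomorphic bijection $\Phi : V(A) \to V(B)$ given by $v \mapsto v \circ \pi^{-1}$, so that every vertex $v$ of $A$ corresponds to the vertex $\pi(v) := v \circ \pi^{-1}$ of $B$ (matching the paper's notation $\pi(v)$, $\pi(w)$). Next, for each ordered pair $(w,v)$, I would apply the preceding proposition to get a $\mathbb{C}$-linear isomorphism $E_{w,v} \cong E_{\pi(w),\pi(v)}$. Since the dimensions agree, $k_{w,v} = k_{\pi(w),\pi(v)}$, so a chosen basis of $E_{w,v}$ is carried to a basis of $E_{\pi(w),\pi(v)}$ of the same cardinality; this produces a bijection between the edges of $A$ with source $v$, range $w$ and the edges of $B$ with source $\pi(v)$, range $\pi(w)$. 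Taking the union of these pairwise bijections over all $(w,v)$ yields a bijection $E(A) \to E(B)$.

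The only thing left is to check that this assembled bijection is compatible with the source and range maps, that is, that an edge with source $v$ and range $w$ is sent to an edge with source $\pi(v)$ and range $\pi(w)$. This is immediate from the construction, since the pair $(w,v)$ indexing the fiber is sent to the pair $(\pi(w),\pi(v))$ under the vertex equivalence $\Phi$, and $\Phi$ is itself a bijection; hence the global map is an equivalence of directed graphs at the level of vertices and edges. I do not expect a genuine obstacle here: the substantive content has already been discharged in the two propositions, and this corollary is a formal consequence, which is exactly why the excerpt remarks that it is immediate.
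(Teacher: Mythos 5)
Your proposal is correct and matches the paper's intent exactly: the paper offers no written proof, declaring the corollary ``immediate'' from the vertex equivalence $v \mapsto v \circ \pi^{-1}$ and the fiberwise isomorphisms $E_{w,v} \cong E_{\pi(w),\pi(v)}$, and your argument is precisely the spelled-out version of that assembly. Nothing is missing; matching bases of equal cardinality fiber by fiber and checking source/range compatibility is all the corollary requires.
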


Unfortunately the edge sets need not necessarily have a direct correlation to elements of the algebra.  To see this consider the algebra $A_{\infty}$ which can be viewed as the graph algebra corresponding to a single vertex and a countable number of loops based at the vertex (where each edge coincides with a generator of the algebra).  However in our construction $E(A)$ will be uncountable.  This also presents us a distinction between our work and that of \cite{KatsoulisKribs}.  One could (if one is restricting to separable operator algebras) presumably just change the number of edges whenever an uncountable number of edges arise between a pair of vertices to a countable set (as was implicit in \cite{KatsoulisKribs}) without great affect.  However since we will need to work with the vector space $E_{w,v}$ this additional structure would assume some canonical choice of algebraic generators, which in the abstract involves some choice which may not be invariant under isomorphism.  However to further our analysis we will need to focus on those cases where such a choice is possible.

Now fix a basis for $E_{w,v}$, call it $ \{ e_{\lambda} \}$ and define the map $t_{\lambda}: E_{w,v} \rightarrow \mathbb{C}$ by
\[ t_{\lambda} (e_{\mu}) =  \begin{cases} 1 & \mu = \lambda \\ 0 & \mu \neq \lambda \end{cases} \]
and extending by linearity.  Now if we let $q: A \rightarrow ( \ker \varphi_A)/K_{w,v}$ be the map which is zero on $\Delta(A)$ and is the canonical quotient on $ \ker \varphi_A$ then we can consider the map
\[ T_{\lambda}(a) = \begin{bmatrix} v( \varphi_A(a)) & t_{\lambda} (q(a)) \\ 0 & w( \varphi_A(a)) \end{bmatrix} \]
which goes from $A$ to $T_2$.  The reader should notice that most of the time this map need not be a homomorphism, let alone completely contractive.  We will say that $A$ is $v$-$w$ free if there is a choice of basis for $E_{w,v}$ such that the associated maps $T_{\lambda}$ are completely contractive homomorphisms.

\begin{prop}
Let $(A, \Delta(A), \varphi_A)$ and $(B, \Delta(B), \varphi_B)$ be operator algebras which are completely isometrically isomorphic via $\pi$ and $v, w \in V(A)$.  If $A$ is $v$-$w$ free then $B$ is $ \pi(v)$-$\pi(w)$ free.
\end{prop}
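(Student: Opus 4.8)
The plan is to transport the whole structure through $\pi$ and $\pi^{-1}$, reducing everything to a single identity of maps. Since $\pi$ is a completely isometric isomorphism carrying $\Delta(A)$ onto $\Delta(B)$ with $\varphi_B = \pi \circ \varphi_A \circ \pi^{-1}$, it maps $\ker \varphi_A$ onto $\ker \varphi_B$; and by the argument already used to identify $E_{w,v}$ with $E_{\pi(w),\pi(v)}$ it carries $K_{w,v}$ onto $K_{\pi(w),\pi(v)}$. Hence $\pi$ descends to a linear isomorphism $\bar\pi: E_{w,v} \to E_{\pi(w),\pi(v)}$ with $\bar\pi([a]) = [\pi(a)]$. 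Given a basis $\{e_\lambda\}$ of $E_{w,v}$ witnessing that $A$ is $v$-$w$ free, my candidate basis for $E_{\pi(w),\pi(v)}$ is simply $\{f_\lambda\} := \{\bar\pi(e_\lambda)\}$, which is a basis precisely because $\bar\pi$ is an isomorphism.

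The key identification I would establish next is that the map $S_\lambda: B \to T_2$ built from the basis $\{f_\lambda\}$ equals $T_\lambda \circ \pi^{-1}$. I would check this entrywise for $b \in B$, writing $a := \pi^{-1}(b)$. The diagonal entries agree because $(\pi(v))(\varphi_B(b)) = v(\pi^{-1}(\varphi_B(b))) = v(\varphi_A(a))$, using $\pi^{-1} \circ \varphi_B = \varphi_A \circ \pi^{-1}$, and similarly for the $(2,2)$-entry with $w$ in place of $v$. For the off-diagonal entry I would observe that $q_B(b) = \bar\pi(q_A(a))$, since $b - \varphi_B(b) = \pi\bigl(a - \varphi_A(a)\bigr)$, and that the dual functionals satisfy $s_\lambda \circ \bar\pi = t_\lambda$, because $s_\lambda(f_\mu) = \delta_{\lambda\mu} = t_\lambda(e_\mu)$. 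Combining these gives $s_\lambda(q_B(b)) = t_\lambda(q_A(a))$, so that $S_\lambda(b) = T_\lambda(a) = T_\lambda(\pi^{-1}(b))$, where $q_A, q_B$ denote the respective quotient maps and $s_\lambda$ the dual basis to $\{f_\lambda\}$.

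Once the identity $S_\lambda = T_\lambda \circ \pi^{-1}$ is in hand the conclusion is immediate: each $T_\lambda$ is a completely contractive homomorphism by hypothesis, and $\pi^{-1}: B \to A$ is a completely isometric isomorphism, so the composite $S_\lambda$ is again a completely contractive homomorphism. Thus the basis $\{f_\lambda\}$ witnesses that $B$ is $\pi(v)$-$\pi(w)$ free.

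The only real obstacle is bookkeeping rather than analysis. One must confirm that the auxiliary maps transport correctly, namely that $q_B \circ \pi = \bar\pi \circ q_A$ on $A$ and that the dual basis of $\{f_\lambda\}$ is obtained from that of $\{e_\lambda\}$ by precomposition with $\bar\pi^{-1}$. No complete-contractivity estimate has to be redone by hand; all of the hard analytic content is inherited from the hypothesis through the single identity $S_\lambda = T_\lambda \circ \pi^{-1}$, so the heart of the argument is verifying that identity cleanly.
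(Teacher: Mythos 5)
Your proof is correct and takes essentially the same route as the paper, whose entire argument is the one line that the result is ``immediate'' from the isomorphism $E_{w,v}\cong E_{\pi(w),\pi(v)}$ induced by $\pi$ in the preceding proposition. Your identity $S_{\lambda}=T_{\lambda}\circ\pi^{-1}$ is precisely the content hidden behind that word: it is the specific $\pi$-induced isomorphism (intertwining $q_{B}\circ\pi=\bar\pi\circ q_{A}$ and carrying the witnessing basis to $\{f_{\lambda}\}$), not bare vector-space isomorphism, that transports complete contractivity, and your write-up makes that explicit where the paper leaves it implicit.
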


\begin{proof}
Since $E_{w,v}$ and $E_{\pi(w),\pi(v)}$ are isomorphic as vector spaces over $ \mathbb{C}$ this is immediate.
\end{proof}

We say that an algebra $(A, \Delta(A), \varphi_A)$ is edge free if it is $v$-$w$ free for every pair $v, w \in V(A)$.  Notice that if $E_{w,v}$ is finite dimensional then it is straightforward to check that $A$ is $v$-$w$ free and hence if $(A, \Delta(A), \varphi_A)$ is vertex-pair finite (meaning between any two vertices there are at most finitely many edges between the vertices) then  $(A, \Delta(A), \varphi_A)$ is edge free.

Notice that the choice of basis is not unique since one could multiply a basis element by a scalar of modulus one and get another basis with the same property.  We will however, when $(A, \Delta(A), \varphi_A)$ is edge free, fix a basis for each $E_{w,v}$ for which the maps $\{ T_{\lambda} \}$ are completely contractive homomorphisms.  Henceforth we will associate to each basis element an edge in the graph with the range and source maps being defined in the natural way.  Specifically for the basis $ \{ e_i \}$ for $E_{w,v}$ for which the maps $T_i$ are completely contractive, we define $r(e_i) = w, s(e_i) = v$.

We now consider paths in the graph.  Specifically we let $(V(A), E(A), r,s)$ be the graph for $(A, \Delta(A), \varphi_A)$ and we say that a path $ w = e_ne_{n-1} \cdots e_1$ is admissible for $A$ if there is a completely contractive representation $ \pi_w: A \rightarrow T_{n+1}$ such that:
\[ \pi_w(a) = \begin{bmatrix} r(e_n)( \varphi_A(a)) & t_n(a) & * & * & \cdots \\ 0 & r(e_{n-1})(\varphi_A(a)) & t_{n-1}(a) & * & \cdots  \\ \vdots & \ddots & \ddots & \ddots & \vdots \\ 0 & \cdots  & 0 & r(e_1)( \varphi_A(a)) & t_1(a) \\ 0 & \cdots & \cdots  & 0 & s(e_1)( \varphi_A(a)) \end{bmatrix} \]
where $*$ indicates a potentially nonzero entry.

\begin{prop}
Let $(A, \Delta(A), \varphi_A)$ and $(B, \Delta(B), \varphi_B)$ be edge free operator algebras which are completely isometrically isomorphic via $ \pi$ then there is an equivalence between admissible paths for $A$ and admissible paths for $B$.
\end{prop}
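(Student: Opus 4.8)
The plan is to reuse the composition device $T \mapsto T\circ\pi^{-1}$ that drives all of the previous propositions. Suppose $e_ne_{n-1}\cdots e_1$ is an admissible path for $A$, witnessed by the completely contractive representation $\pi_w\colon A\to T_{n+1}$ displayed above, and set $\rho = \pi_w\circ\pi^{-1}\colon B\to T_{n+1}$. Since $\pi^{-1}$ is a completely isometric algebra isomorphism and $\pi_w$ is a completely contractive homomorphism, $\rho$ is again a completely contractive homomorphism into $T_{n+1}$. The first task is then to verify that $\rho$ has the shape required of an admissibility witness for $B$, and afterward to run the same construction with $\pi$ in place of $\pi^{-1}$ to obtain an inverse, so that the two assignments become mutually inverse bijections.

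For the diagonal I would invoke the identity $\varphi_B = \pi\circ\varphi_A\circ\pi^{-1}$ from the proposition on the triples, which rearranges to $\varphi_A\circ\pi^{-1} = \pi^{-1}\circ\varphi_B$. Hence the $(j,j)$ entry of $\rho(b)$ is $r(e_{n-j+1})\big(\varphi_A(\pi^{-1}(b))\big) = \big(r(e_{n-j+1})\circ\pi^{-1}\big)(\varphi_B(b))$, and by the vertex proposition $r(e_{n-j+1})\circ\pi^{-1}$ is precisely the vertex of $V(B)$ corresponding to $r(e_{n-j+1})$; the final diagonal entry is handled identically with $s(e_1)$. Thus the diagonal of $\rho$ consists of the transported vertices applied to $\varphi_B$, as demanded. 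For the superdiagonal I would compress $\rho$ to each contiguous $2\times 2$ diagonal block: for an upper triangular homomorphism such a compression is again a completely contractive homomorphism $B\to T_2$ connecting the transported source and range with off-diagonal entry $t_i\circ\pi^{-1}$. Therefore $t_i\circ\pi^{-1}$ represents a nonzero class in the edge space $E_{\pi(r(e_i)),\pi(s(e_i))}$ of $B$, namely the image of the class of $e_i$ under the edge-space isomorphism induced by $\pi$ in the earlier proposition.

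The step I expect to be the main obstacle is reconciling this transported superdiagonal with the distinguished edges of $B$. The bases for $E_{w,v}$ and for $E_{\pi(w),\pi(v)}$ were fixed independently, each chosen so that the associated maps $T_\lambda$ are completely contractive homomorphisms, and the edge-space isomorphism induced by $\pi$ need not carry one chosen basis to the other. Consequently $t_i\circ\pi^{-1}$ is a priori a linear combination of the basis edge maps of $B$ rather than a single one, so $\rho$ does not literally display a path whose superdiagonal entries are individual edges of $B$. The crux is therefore to argue that admissibility is a property of the existence of the witnessing representation rather than of a particular coordinatization: the representation $\rho$ itself transports, and the induced isomorphisms identify the sequence $(e_n,\dots,e_1)$ with a canonical sequence of nonzero edge-space classes for $B$. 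I would close the argument by phrasing the equivalence of admissible paths at the level of these edge-space classes, where the transport is canonical, and only afterward use the edge-free hypothesis to pass back to honest bases; the complete contractivity and the diagonal computation, by contrast, are routine consequences of the earlier propositions.
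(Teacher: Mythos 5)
Your proposal is correct and follows essentially the same route as the paper: the published proof likewise transports the witnessing representation by composing with the inverse isomorphism (the paper writes $\tau \circ \pi \colon B \rightarrow T_{n+1}$, an evident typo for $\tau \circ \pi^{-1}$) and declares the transported path $\pi(e_n)\pi(e_{n-1})\cdots\pi(e_1)$ admissible. The basis-mismatch obstacle you single out is real, but it is one the paper passes over silently --- it simply treats $\pi(e_i)$ as an edge of $B$, i.e.\ as a member of the independently fixed basis of $E_{\pi(w),\pi(v)}$ --- so your diagonal computation via $\varphi_B = \pi \circ \varphi_A \circ \pi^{-1}$, the $2\times 2$ compressions, and the class-level reformulation make your argument, if anything, more careful than the paper's own.
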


\begin{proof}
Begin with the admissible path $e_ne_{n-1} \cdots e_1$ in $A$ and consider the path $\pi(e_n)\pi(e_{n-1}) \cdots \pi(e_1)$ in $B$.  We will show that this path is admissible.  Equivalence of admissible paths will then follow.  Since $e_ne_{n-1} \cdots e_1$ is an admissible path for $A$ there exists a representation $\tau: A \rightarrow T_{n+1}$ given by
\[ \tau(a) = \begin{bmatrix} r(e_n) & e_1(a) & e_2e_1(a) & \cdots & e_n e_{n-1} \cdots e_1(a) \\ 0 & r(e_{n-1}) & e_2(a) & \cdots & e_{n-1}e_{n-2} \cdots e_1(a) \\ \vdots & \cdots & \ddots & \ddots & \vdots \\ 0 & \cdots & 0 & 0 & s(e_1) \end{bmatrix} \]
where $e_ne_{n-1} \cdots e_1$ is non zero.  Notice then that $\tau \circ \pi: B \rightarrow T_{n+1}$ gives a representation that forces $ \pi(e_n) \pi(e_{n_1}) \cdots \pi(e_1)$ to be admissible.
\end{proof}

We now consider topological graphs that are related to our algebra $(A, \Delta(A), \varphi_A)$ and the associated graph $(V(A),E(A), r, s)$.  To motivate this we consider the example of a multivariate tensor algebra:

\begin{example}
Specifically if $ \alpha = \{\alpha_i \}$ is a finite collection of $*$-automorphisms of $C(X)$ then we consider the algebra $A(X, \alpha)$ as defined in \cite{DavidsonKatsoulis}.  The graph associated to this consists of $X$ as the vertex set and to each elements of $X$ there is an edge $e_{i,x}$ such that $s(e_{i,x}) = x$ and $r(e_{i,x}) = \alpha_i(x)$.  However since this is a semicrossed product we have additional information.  Specifically we know that for fixed $i$ the two dimensional nest representations corresponding to the $e_{i,x}$ are all the range of the same isometry $S_i$ under the obvious mapping.  There is then some reason to connect the distinct $\{ e_{i,x} \}_{x \in X}$ as a single edge.
\end{example}

We consider a partition of the edge set $E(A) = \cup_{\lambda \in \Lambda} E_{\lambda}$ such that if $ e, f$ are in the same component then $s(e) \neq s(f)$.  Now for each $ \lambda$ we let $V_{\lambda} = \{ v \in V(A): v = s(e) \mbox{ for some } e \in E_{\lambda}$.  We let $V_{\lambda}$ inherit the weak-$*$ topology from $V(A)$ and we consider the subspace of $\Lambda \times V$ (with the product topology) given by $ \{ (\lambda, v): v \in V_{\lambda} \}$, call it $E_{\Lambda}$.  Next we define $s(\lambda, v) = v$ and $r(\lambda, v) = r(e)$ where $e \in E_{\lambda}$ and $s(e) = v$.  Notice that this is well defined since no two elements in the same component $E_{\lambda}$ share the same source.

As one can imagine it is unlikely given an arbitrary partition of the edge set that $(V(A), E_{\lambda}, r, s)$ is a topological graph.  We say that a partition is topological if we do in fact have a topological graph.  However, as the next example illustrates there may be multiple choices of partition which give rise to topological graphs.

\begin{example}
Consider the space $X = \{0,1, 2 \}$ and the homeomorphism $\alpha(x) = x$ and consider $A = C(X) \rtimes_{\alpha} \mathbb{Z}^+$.  Then the directed graph associated to $A$ is the graph
\[ \xymatrix{ {\bullet} \ar@(ul,ur) & {\bullet} \ar@(ul,ur) & {\bullet} \ar@(ul,ur) }. \]
Any partition of the edge set will give rise to a topological graph.
\end{example}

It is not hard to see that if we consider the trivial partition of $E(A)$ into singleton sets we get a topological partition.  But this may not be suitable for the question at hand.  For example, in \cite{DavidsonRoydor} a canonical partition is given for the algebras they consider, which coincides with the trivial partition only when the edge set has the discrete topology.  We should point out that the construction of Davidson and Roydor need not give rise to a reasonable partition for algebras which are not tensor algebras as the next example illustrates.

\begin{example}
Let $X= [0,1]$ and $ \alpha_1(x) = x$ and $ \alpha_2(x) = x$.  Then the semicrossed product $C(X) \rtimes_{\alpha_1, \alpha_2} \mathbb{F}_2^+$ gives rise to a directed graph of the form with vertex set homeomorphic to $[0,1]$ and at each point in $[0,1]$ there are two edges with range and source given by that point.

Now the equivalence relation given in Davidson-Roydor will give rise to the trivial partition where every edge is equivalent to every other edge.  Of course since we know where the edges come from we would most likely (again depending on context) wish to consider the partition that separates those edges corresponding to $\alpha_1$ from those corresponding to $\alpha_2$.
\end{example}

This is not surprising since there is additional structure Davidson and Roydor exploit in their context, specifically for their operator algebras the range of the partial isometry associated to two edges must be disjoint.  Of course our example, specifically in the case of semicrossed products illustrates a barrier to extending the results of \cite{DavidsonRoydor}.  Barring further evidence for the use of a specific partition we take the view that multiple partitions may be equally useful depending on context.

\begin{prop}
Let $(A, \Delta(A), \varphi_A)$ and $(B, \Delta(B), \varphi_B)$ be operator algebras which are completely isometrically isomorphic via $ \pi$ then there is an equivalence between topological partitions of $E(A)$ and topological partitions of $E(B)$.
\end{prop}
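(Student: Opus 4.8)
The plan is to transport partitions through the isomorphism and to check that being topological is preserved. First I would assemble the structure furnished by the earlier results. Proposition~2 provides a weak-$*$ homeomorphism $\pi_* : V(A) \to V(B)$, $v \mapsto v \circ \pi^{-1}$, and the Corollary provides a bijection $\pi_\sharp : E(A) \to E(B)$ of edge sets (any choice of basis-matching will do, since only source and range will matter). The essential observation, which I would extract from the fact that the edge isomorphism of Proposition~4 carries $E_{w,v}$ onto $E_{\pi(w),\pi(v)}$, is that $\pi_\sharp$ intertwines source and range with $\pi_*$: for every $e \in E(A)$ we have $s(\pi_\sharp(e)) = \pi_*(s(e))$ and $r(\pi_\sharp(e)) = \pi_*(r(e))$. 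In other words $(\pi_*, \pi_\sharp)$ is an isomorphism of the directed graphs $(V(A),E(A),r,s)$ and $(V(B),E(B),r,s)$ whose vertex component is a homeomorphism.

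Given a partition $E(A) = \cup_{\lambda \in \Lambda} E_\lambda$ I would define the transported partition $E(B) = \cup_{\lambda \in \Lambda} \pi_\sharp(E_\lambda)$ over the same index set $\Lambda$. The first routine check is that this is an admissible partition: if $e \neq f$ lie in a common $E_\lambda$ then $s(e) \neq s(f)$, and injectivity of $\pi_*$ gives $s(\pi_\sharp(e)) = \pi_*(s(e)) \neq \pi_*(s(f)) = s(\pi_\sharp(f))$, so distinct edges of a transported component still have distinct sources. The assignment $\{E_\lambda\} \mapsto \{\pi_\sharp(E_\lambda)\}$ is then manifestly a bijection between all partitions of $E(A)$ and all partitions of $E(B)$, with inverse induced by $\pi^{-1}$.

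The substantive step is to see that this bijection restricts to one between \emph{topological} partitions. For the partition of $E(A)$ the construction produces $V_\lambda = \{s(e) : e \in E_\lambda\} \subseteq V(A)$ with the relative topology, the edge space $E_\Lambda \subseteq \Lambda \times V(A)$ with the product topology, and the maps $s(\lambda,v) = v$ and $r(\lambda,v) = r(e)$ for the unique $e \in E_\lambda$ with source $v$; write $V_\lambda'$, $E_\Lambda'$ for the analogous data of the transported partition. Because $\pi_\sharp$ intertwines sources, $V_\lambda' = \pi_*(V_\lambda)$, and $\pi_*$ restricts to a homeomorphism $V_\lambda \to V_\lambda'$. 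Consequently $(\lambda, v) \mapsto (\lambda, \pi_*(v))$ is the restriction of the homeomorphism $\mathrm{id}_\Lambda \times \pi_*$ and hence a homeomorphism $E_\Lambda \to E_\Lambda'$, the identity on the $\Lambda$-factor making its topology irrelevant. A direct computation shows this homeomorphism intertwines the structure maps: for the source it is immediate, and for the range one uses $r(\pi_\sharp(e)) = \pi_*(r(e))$ exactly as above. Thus $(V(B), E_\Lambda', r, s)$ is obtained from $(V(A), E_\Lambda, r, s)$ by conjugating both the vertex and the edge spaces by homeomorphisms that intertwine $r$ and $s$.

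Since every condition in the definition of a topological graph—local compactness and Hausdorffness of the vertex and edge spaces, continuity of the range map, and the local-homeomorphism property of the source map—is phrased purely in terms of the spaces $V$, $E_\Lambda$ and the maps $r, s$, each is invariant under such a conjugation. Hence $(V(A), E_\Lambda, r, s)$ is a topological graph if and only if $(V(B), E_\Lambda', r, s)$ is, so a partition is topological for $A$ exactly when its transport is topological for $B$, giving the desired equivalence. I expect the main obstacle to be the bookkeeping of the third paragraph, specifically confirming from the construction of $\pi_\sharp$ that source and range are genuinely intertwined with $\pi_*$ and hence that $V_\lambda' = \pi_*(V_\lambda)$; once that compatibility is secured, the transfer of each individual topological-graph axiom under a homeomorphism is purely formal.
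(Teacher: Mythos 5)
Your proposal is correct and follows essentially the same route as the paper: transport the partition through the edge bijection induced by $\pi$, use the weak-$*$ homeomorphism of vertex spaces to conjugate the range and source maps, and observe that the topological-graph axioms (continuity and properness of $r$, the local-homeomorphism property of $s$) are preserved under such conjugation. Your version merely makes explicit the intermediate homeomorphism $\mathrm{id}_\Lambda \times \pi_* : E_\Lambda \to E_\Lambda'$ and the source/range intertwining, details the paper's proof leaves implicit.
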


\begin{proof}
Fix a topological partition for $E(A)$ and define a relation on $E(B)$ by $e \sim f$ if $ e= \pi(e')$, $ f = \pi(f')$, and $e' \sim' f'$ in $E(A)$, where $\sim'$ is the equivalence relation on $E_{\Lambda}(A)$ giving rise to the topological partition.  That $ \sim$ is an equivalence relation on $E(B)$ comes from the fact that $\pi$ induces an equivalence between $E(A)$ and $E(B)$.  Now since $\sim'$ induces a topological partition on $E(A)$ we know that the range map $r: E(A) / \sim' \rightarrow V(A)$ is continuous and proper and the source map $ s: E(A) / \sim' \rightarrow V(A)$ is a local homeomorphism.  But since the map $\pi$ induces a homeomorphism between $V(A)$ and $V(B)$ we have that $ [\pi(r(\pi^{-1}(e)))] = r([e])$ is continuous and proper and $ [\pi(s( \pi^{-1}(e)))] = s([e])$ is a local homeomorphism and hence the partition of $E(B)$ is topological.
\end{proof}

Given two topological partitions $\Lambda_1$ and $ \Lambda_2$ of $E(A)$ we say that $\Lambda_1$ is less than or equal to $\Lambda_2$ if whenever $ e$ is equivalent to $f$ in $\Lambda_2$ then $e$ is equivalent to $f$ in $\Lambda_1$.  Notice that the trivial partition is always the smallest partition, and the discrete partition is always the largest partition under this partial ordering.

\begin{prop}
Let $(A, \Delta(A), \varphi_A)$ and $(B, \Delta(B), \varphi_B)$ be operator algebras which are completely isometrically isomorphic via $\pi$ then the equivalence relations between topological partitions of $E(A)$ and topological partitions of $E(B)$ preserve ordering.
\end{prop}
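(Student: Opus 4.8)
The plan is to unwind the definition of the correspondence on topological partitions supplied by the previous proposition and check directly that it respects the refinement ordering; no new topological input is required beyond what was already verified there. Recall that by the corollary $\pi$ induces a bijection $E(A) \to E(B)$ at the level of edges, and that given a topological partition $\Lambda$ of $E(A)$ with associated equivalence relation $\sim_\Lambda$, the corresponding partition of $E(B)$ is the pushforward: two edges $g, h \in E(B)$ are declared equivalent exactly when $g = \pi(e)$ and $h = \pi(f)$ for some $e, f \in E(A)$ with $e \sim_\Lambda f$. Because $\pi$ is a bijection on edges, each $\sim_\Lambda$-class $C \subseteq E(A)$ is carried onto the single class $\pi(C) \subseteq E(B)$, so this pushforward really is a partition whose blocks are precisely the $\pi$-images of the blocks of $\Lambda$. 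I will write $\pi_*(\Lambda)$ for this induced partition.

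First I would restate the ordering in terms of blocks: by definition $\Lambda_1 \leq \Lambda_2$ says that every block of $\Lambda_2$ is contained in a block of $\Lambda_1$, equivalently that $e \sim_{\Lambda_2} f$ implies $e \sim_{\Lambda_1} f$. Now suppose $\Lambda_1 \leq \Lambda_2$ are two topological partitions of $E(A)$, and let $g, h \in E(B)$ be equivalent in $\pi_*(\Lambda_2)$. Writing $g = \pi(e)$ and $h = \pi(f)$, the definition of the pushforward gives $e \sim_{\Lambda_2} f$; applying $\Lambda_1 \leq \Lambda_2$ yields $e \sim_{\Lambda_1} f$, and pushing forward once more shows $g$ and $h$ are equivalent in $\pi_*(\Lambda_1)$. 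This is exactly the assertion $\pi_*(\Lambda_1) \leq \pi_*(\Lambda_2)$, so the map $\Lambda \mapsto \pi_*(\Lambda)$ is order-preserving.

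I would then run the identical argument for the inverse correspondence, which is induced by $\pi^{-1}$ (again a completely isometric isomorphism), to conclude that the reverse map is order-preserving as well. Combined with the fact, established in the previous proposition, that these two maps are mutually inverse bijections between the posets of topological partitions, this exhibits the correspondence as an order isomorphism, which is the claimed preservation of ordering.

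The only point needing a little care, though it is not a genuine obstacle, is the observation that the pushforward of an equivalence relation along the edge bijection $\pi$ is again an equivalence relation whose classes are exactly the $\pi$-images of the original classes; this is where both injectivity and surjectivity of $\pi$ on $E(A)$ are used, and it is precisely what lets ``equivalent in $\pi_*(\Lambda)$'' collapse to the clean implication exploited above. Everything else is a direct translation of the definitions.
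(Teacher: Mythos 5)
Your proposal is correct and follows essentially the same route as the paper: the paper's (one-line) proof likewise pushes the equivalence relations forward along the edge bijection induced by $\pi$ and observes that the refinement ordering transfers, which is exactly the implication $e \sim_{\Lambda_2} f \Rightarrow e \sim_{\Lambda_1} f$ chase you carry out in detail. Your write-up simply makes explicit what the paper calls ``straightforward,'' including the (correct) remark that running the argument for $\pi^{-1}$ upgrades the statement to an order isomorphism.
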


\begin{proof}
Given topological partitions $\Pi_1$ and $ \Pi_2$ of $E(A)$ with $ \Pi_1 \leq \Pi_2$ we wish to show that $ \pi( \Pi_1) \leq \pi( \Pi_2)$.  This is straightforward since the equivalences for $E(A)$ will transfer via $\pi$ to equivalences on $E(B)$ which preserve the ordering on partitions.
\end{proof}

We will now fix a topological partition for an edge-free operator algebra, and denote it's topological graph as $(V(A), E_{\Lambda}, r, s)$.  We will also assume that $\Lambda$ is a countable set.  In an effort to differentiate semicrossed products from tensor algebras we consider the extension of an idea from \cite{Duncanedge}.  We say that a function $f: \Lambda \rightarrow \mathbb{N}$ is an edge-coloring (\cite{AraGoodearl} call this a separation) of $ (V(A), E_{\lambda}, r, s)$ if
$f( \lambda_1) \neq f(\lambda_2)$ when there exist edges $e_1 \in E_{\lambda_1}$ and $e_2 \in E_{\lambda_2}$ such that $r(e_1) = r(e_2)$ and there is a completely contractive representation $\pi: A \rightarrow T_3$ given by
\[ \pi(a) = \begin{bmatrix} r(e_1) & t_1(a) & t_2(a) \\ 0 & s(e_1) & 0 \\ 0 & 0 & s(e_2) \end{bmatrix}.\]

\begin{prop}
Let $(A, \Delta(A), \varphi_A)$ and $(B, \Delta(B), \varphi_B)$ be edge-free operator algebras which are completely isometrically isomorphic via $\pi$ and a fixed topological partition of $E(A)$, call it $ \mathcal{E}$ then there is a equivalence between coloring functions on $\mathcal{E}$ and coloring functions on the $\pi(\mathcal{E})$.
\end{prop}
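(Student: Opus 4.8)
The plan is to transport a coloring on $\mathcal{E}$ to a coloring on $\pi(\mathcal{E})$ along the bijections already established, and then to verify the defining separation condition by pulling back the relevant $T_3$ representations. By the proposition on topological partitions, $\pi(\mathcal{E})$ is a topological partition of $E(B)$, and $\pi$ induces a bijection $\lambda \mapsto \pi(\lambda)$ between the index set $\Lambda_A$ of $\mathcal{E}$ and the index set $\Lambda_B$ of $\pi(\mathcal{E})$ that is compatible with the edge equivalence $E(A) \to E(B)$ of the earlier corollary. Given a coloring $f \colon \Lambda_A \to \mathbb{N}$ I would define $\tilde f \colon \Lambda_B \to \mathbb{N}$ by $\tilde f(\pi(\lambda)) = f(\lambda)$, with the symmetric construction using $\pi^{-1}$ providing the inverse assignment; thus the whole task reduces to checking that $\tilde f$ is in fact a coloring.

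To do this, suppose $e_1' \in E_{\mu_1}$ and $e_2' \in E_{\mu_2}$ are edges of $E(B)$ with $r(e_1') = r(e_2')$ that are witnessed by a completely contractive representation $\tau \colon B \to T_3$ of the required form. Writing $e_i = \pi^{-1}(e_i')$ and $\lambda_i = \pi^{-1}(\mu_i)$, I would consider $\tau \circ \pi \colon A \to T_3$, which is completely contractive because $\pi$ is a completely isometric isomorphism. The heart of the argument is to show that $\tau \circ \pi$ has exactly the diagonal form witnessing the separation condition for $e_1, e_2$ in $A$, namely that its diagonal entries are $r(e_1)(\varphi_A(a))$, $s(e_1)(\varphi_A(a))$, and $s(e_2)(\varphi_A(a))$.

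For this computation I would use two facts from above. First, the range and source maps intertwine with the vertex equivalence (as is clear from the construction of the edge equivalence in the earlier proposition), so that $r(e_i') = r(e_i) \circ \pi^{-1}$ and $s(e_i') = s(e_i) \circ \pi^{-1}$; in particular $r(e_1') = r(e_2')$ forces $r(e_1) = r(e_2)$ because $v \mapsto v \circ \pi^{-1}$ is a bijection on vertices. Second, by the proposition producing the triple for $B$ we have $\varphi_B = \pi \circ \varphi_A \circ \pi^{-1}$, hence $\varphi_B(\pi(a)) = \pi(\varphi_A(a))$ for every $a \in A$. Combining these, a typical diagonal entry of $\tau(\pi(a))$ computes as
\[ r(e_1')(\varphi_B(\pi(a))) = (r(e_1) \circ \pi^{-1})(\pi(\varphi_A(a))) = r(e_1)(\varphi_A(a)), \]
and likewise for the two source entries, so $\tau \circ \pi$ is a representation of the form appearing in the coloring condition for $e_1, e_2$.

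With the form confirmed, I would conclude that $e_1, e_2$ witness the separation condition in $A$, so that the hypothesis that $f$ is a coloring gives $f(\lambda_1) \neq f(\lambda_2)$; by the definition of $\tilde f$ this is exactly $\tilde f(\mu_1) \neq \tilde f(\mu_2)$, establishing that $\tilde f$ is a coloring on $\pi(\mathcal{E})$. Running the identical argument with $\pi^{-1}$ in place of $\pi$ shows that $g \mapsto g \circ \pi$ carries colorings on $\pi(\mathcal{E})$ back to colorings on $\mathcal{E}$ and is inverse to $f \mapsto \tilde f$, yielding the desired equivalence. The one point requiring care, and the main potential obstacle, is confirming that the off-diagonal data of $\tau \circ \pi$ still genuinely corresponds to the edge maps for $e_1, e_2$ and does not degenerate; here one must lean on $\pi$ being an isomorphism, so that nonvanishing of the relevant linear maps is preserved under precomposition.
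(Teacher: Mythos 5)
Your proposal is correct and follows essentially the same route as the paper's own proof: transport the coloring along the bijection induced by $\pi$, then verify the separation condition by composing a witnessing completely contractive $T_3$ representation of $B$ with $\pi$ to obtain one for the pulled-back edges in $A$. Your explicit verification of the diagonal entries via $\varphi_B = \pi \circ \varphi_A \circ \pi^{-1}$ and the vertex correspondence $v \mapsto v \circ \pi^{-1}$ is a detail the paper leaves implicit, but the argument is the same.
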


\begin{proof}
Let $f$ be a coloring function on $\mathcal{E}$ then for $ e \in \pi(\mathcal{E})$ we define $\pi(f)(e) = (f \circ \pi)(e)$.  Then $\pi(f)$ is a function from $\pi(\mathcal{E})$ into $ \mathbb{N}$.  That the map $ f \mapsto \pi(f)$ is a bijection is straightforward.  We wish to verify that $ \pi(f)$ is a coloring function for $\pi( \mathcal{E})$.  To see this assume that $[e] \neq [f] \in \pi(\mathcal{E})$ are differently colored.  In particular this means that there exist edges $ e \in [e]$ and $f \in [f]$ with $r(e) = r(f)$ and a completely contractive representation $\tau: B \rightarrow T_3$ given by
\[ \tau(b) = \begin{bmatrix} r(e) & e(b) & f(b) \\ 0 & s(e) & 0 \\ 0 & 0 & s(f) \end{bmatrix} \] for all $b \in B$.
Then notice that $ \tau \circ \pi$ is a completely contractive representation of $A$ such that
\[ \tau \circ \pi (a) = \begin{bmatrix} \pi^{-1}(r(e)) & \pi^{-1}(e)(a) & \pi^{-1}(f)(b) \\ 0 & \pi^{-1}(s(e)) & 0 \\ 0 & 0 & \pi^{-1}(s(f)) \end{bmatrix} \] for all $ a \in A$.  In particular this means that $ [ \pi^{-1}(e)]$ and $ [\pi^{-1}(f)]$ are differently colored in $ \mathcal{E}$.  Hence the map $ \pi(f)$ is a coloring function for $\pi(\mathcal{E}))$.
\end{proof}

We say that a coloring function $f$ is less than or equal to $g$ if there exists a permutation of $ \sigma: \mathbb{N} \rightarrow \mathbb{N}$ such that $f(n) \leq g(\sigma(n))$ for all $n \in \mathbb{N}$.

\begin{prop}
Let $(A, \Delta(A), \varphi_A)$ and $(B, \Delta(B), \varphi_B)$ be operator algebras which are completely isometrically isomorphic via $\pi$ with topological partition $ \mathcal{E}$ for $E(A)$ and $\pi(\mathcal{E})$ for $E(B)$.  If $ f_1 \leq f_2$ are coloring functions then $ \pi(f_1) \leq \pi(f_2)$.
\end{prop}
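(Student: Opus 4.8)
The plan is to transport the permutation witnessing $f_1 \leq f_2$ across the bijection that $\pi$ sets up between the two partitions. Recall from the previous proposition that $\pi$ induces a bijection, call it $\beta$, from the index set of $\pi(\mathcal{E})$ onto the index set of $\mathcal{E}$, carrying each block of $\pi(\mathcal{E})$ to the corresponding block of $\mathcal{E}$, and that by construction $\pi(f) = f \circ \beta$ for every coloring function $f$ on $\mathcal{E}$. Since $\Lambda$ is assumed countable, I would identify both index sets with $\mathbb{N}$, so that $\beta$ becomes a bijection of $\mathbb{N}$ and each $\pi(f_i)$ is literally $f_i \circ \beta$ as a function from $\mathbb{N}$ into $\mathbb{N}$.

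With this in hand the argument is a conjugation. By hypothesis $f_1 \leq f_2$, so there is a permutation $\sigma: \mathbb{N} \rightarrow \mathbb{N}$ with $f_1(n) \leq f_2(\sigma(n))$ for all $n \in \mathbb{N}$. I would set $\tau = \beta^{-1} \circ \sigma \circ \beta$, which is again a permutation of $\mathbb{N}$ because $\beta$ is a bijection and $\sigma$ is a permutation. Then for every $m \in \mathbb{N}$,
\[ \pi(f_1)(m) = f_1(\beta(m)) \leq f_2(\sigma(\beta(m))) = f_2(\beta(\tau(m))) = \pi(f_2)(\tau(m)), \]
where the inequality is the defining inequality for $f_1 \leq f_2$ applied at the index $\beta(m)$, and the middle equality uses $\beta \circ \tau = \sigma \circ \beta$. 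This exhibits $\tau$ as a permutation witnessing $\pi(f_1) \leq \pi(f_2)$, which is exactly what is required.

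The content here is bookkeeping rather than analysis, and the only point that needs care is the identification $\pi(f_i) = f_i \circ \beta$: one must be sure the relabeling of colors is performed by the single bijection $\beta$ attached to $\pi$, the same one for both $f_1$ and $f_2$, so that conjugating $\sigma$ by $\beta$ lands the inequalities in the right places. Once that is pinned down, verifying that $\tau = \beta^{-1} \sigma \beta$ is a permutation and that the displayed chain of (in)equalities holds is immediate. I do not anticipate any genuine obstacle; the most delicate step is simply making the ambient identification of the two countable index sets with $\mathbb{N}$ explicit, since the definition of $\leq$ is phrased in terms of permutations of $\mathbb{N}$.
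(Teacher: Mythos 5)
Your argument is correct and is exactly the bookkeeping the paper has in mind: its proof simply asserts that the claim follows from the definition of the ordering together with the relationship between $f$ and $\pi(f)$, which is precisely your conjugation $\tau = \beta^{-1}\sigma\beta$ spelled out. You have merely made explicit the identification $\pi(f_i) = f_i \circ \beta$ and the transferred permutation that the paper leaves implicit.
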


\begin{proof}
This is a straightforward application of the definition of the ordering on coloring functions and the relationship between a coloring function $f$ for $\mathcal{E}$ and $ \pi(f)$.
\end{proof}

Notice that a coloring function is maximal if no two edges with common range have the same coloring.  Of course there will be many inequivalent maximal coloring functions and potentially many inequivalent minimal coloring functions.

\begin{cor}
Let $(A, \Delta(A), \varphi_A)$ and $(B, \Delta(B), \varphi_B)$ be operator algebras which are completely isometrically isomorphic via $ \pi$ with topological partitions $ \mathcal{E}$ for $E(A)$ and $ \pi( \mathcal{E})$ for $E(B)$.  If $f$ is a maximal coloring function for $\mathcal{E}$ then there $\pi(f)$ is a maximal coloring function for $\pi(\mathcal{E})$.  Similarly there is an equivalence between minimal coloring functions for $\mathcal{E}$ and $ \pi(\mathcal{E})$.
\end{cor}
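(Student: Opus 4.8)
The plan is to lean on the two results immediately preceding the statement: the proposition establishing that $f \mapsto \pi(f)$ is a bijection between coloring functions for $\mathcal{E}$ and coloring functions for $\pi(\mathcal{E})$, together with the proposition showing that this bijection preserves the ordering, $f_1 \le f_2 \Rightarrow \pi(f_1) \le \pi(f_2)$. First I would upgrade order preservation to an order isomorphism. Since $\pi^{-1}: B \to A$ is again a completely isometric isomorphism carrying the partition $\pi(\mathcal{E})$ of $E(B)$ back to the partition $\mathcal{E}$ of $E(A)$, the same proposition applied to $\pi^{-1}$ yields $g_1 \le g_2 \Rightarrow \pi^{-1}(g_1) \le \pi^{-1}(g_2)$. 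Hence $f \mapsto \pi(f)$ is an order isomorphism between the two posets of coloring functions, and an order isomorphism carries minimal elements to minimal elements; this gives the asserted equivalence between minimal coloring functions for $\mathcal{E}$ and for $\pi(\mathcal{E})$ at once.

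For the maximal case I would argue directly from the combinatorial characterization recorded just before the statement, that $f$ is maximal precisely when no two edges with common range receive the same color. The input is that $\pi$ induces a bijection between $E(A)$ and $E(B)$ (the edge-equivalence corollary) which is compatible with the range map in the sense already extracted in the proof of the topological-partition proposition, where one has $[\pi(r(\pi^{-1}(e)))] = r([e])$; since $\pi$ also restricts to a bijection on the vertex set $V(A)$, two edges $g_1, g_2 \in E(B)$ satisfy $r(g_1) = r(g_2)$ if and only if their preimages $e_i = \pi^{-1}(g_i)$ satisfy $r(e_1) = r(e_2)$. By construction the color assigned by $\pi(f)$ to a component of $\pi(\mathcal{E})$ agrees with the color assigned by $f$ to the corresponding component of $\mathcal{E}$, so the colors of $g_1$ and $g_2$ coincide exactly when the colors of $e_1$ and $e_2$ coincide. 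Therefore $\pi(f)$ leaves some common-range pair in $E(B)$ unseparated if and only if $f$ leaves the corresponding common-range pair in $E(A)$ unseparated, and maximality of $f$ forces maximality of $\pi(f)$.

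The only delicate bookkeeping is verifying that the edge bijection respects the partition into components, not merely the range map, so that the phrase "two edges in distinct components with a common range" is transported faithfully between $A$ and $B$. This is exactly what the topological-partition proposition supplies, since its proof records both the range identity above and the fact that $\pi$ induces an equivalence of the partitions themselves. Granting that, there is no real obstacle and the corollary follows from assembling these observations.
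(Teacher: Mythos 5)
Your proposal is correct and takes essentially the paper's intended route: the paper states this corollary with no separate proof, presenting it as immediate from the two preceding propositions (the bijection $f \mapsto \pi(f)$ between coloring functions and its order preservation), and your argument is exactly that assembly, upgrading to an order isomorphism by applying the order-preservation proposition to $\pi^{-1}$ for the minimal case and invoking the paper's own characterization of maximality (no two common-range edges share a color) for the maximal case. The only point worth flagging is cosmetic: the paper's formula $\pi(f)(e) = (f \circ \pi)(e)$ is evidently a typo for $f \circ \pi^{-1}$ since $f$ is defined on $\mathcal{E}$, and your reading silently makes that correction.
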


Based on what we have so far we have the following theorem.

\begin{thm}
Let $A$ be an operator algebra then the following are completely isometric isomorphism invariants for $A$
\begin{enumerate}
\item The graph $G(A)$.
\item Edge freeness of $A$.
\item Admissible paths in the graph $G(A)$ when $A$ is edge free.
\item The weak-$*$ topology on $V(A)$.
\item The partially ordered set of topological partitions of $E(A)$.
\item The partially ordered set of coloring functions for a fixed topological partition of $E(A)$.
\item The collection of edge colored topological graphs compatible with $G(A)$ when $A$ is edge free.
\end{enumerate}
\end{thm}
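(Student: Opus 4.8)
The plan is to recognize this as a compilation theorem: each of the seven items has, in effect, already been established by one or more of the preceding propositions and corollaries, so the remaining work is to match each item to its supporting result and to confirm that all of the transported structures arise from the \emph{single} completely isometric isomorphism $\pi$, so that they cohere. Throughout I would fix $\pi: A \to B$ and exploit the two bijections it induces: the weak-$*$ homeomorphism $V(A) \to V(B)$ given by $\rho \mapsto \rho \circ \pi^{-1}$, and the edge equivalence $E(A) \to E(B)$.

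For item (1), I would observe that the graph $G(A) = (V(A), E(A), r, s)$ is invariant by combining the proposition giving the vertex bijection with the corollary giving the edge equivalence; the source and range maps are respected because the vector-space isomorphism $E_{w,v} \cong E_{\pi(w),\pi(v)}$ records precisely the source $v$ and range $w$. Item (2) is immediate from the proposition stating that $v$-$w$ freeness of $A$ forces $\pi(v)$-$\pi(w)$ freeness of $B$, so edge freeness transfers. Item (3) follows from the proposition establishing an equivalence of admissible paths under $\pi$. Item (4) follows from the weak-$*$-weak-$*$ continuity in the vertex proposition, which upgrades the vertex bijection to a homeomorphism.

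Items (5) and (6) each pair an equivalence with an order-preservation statement. For (5), I would cite the proposition giving an equivalence between topological partitions of $E(A)$ and of $E(B)$, together with the proposition showing this equivalence respects the partial order $\Lambda_1 \leq \Lambda_2$; together these say the two partially ordered sets are order-isomorphic. For (6), I would analogously combine the proposition giving an equivalence of coloring functions for a fixed partition with the proposition showing that $f_1 \leq f_2$ implies $\pi(f_1) \leq \pi(f_2)$, and note the corollary matching maximal colorings to maximal colorings and minimal to minimal. Item (7) is the synthesis: it bundles the invariant graph from (1), a choice of topological partition from (5), and a compatible coloring from (6) into one edge-colored topological graph, and asserts that the collection of all such objects compatible with $G(A)$ is carried bijectively to its counterpart over $B$.

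I expect the only genuine subtlety to lie in item (7). The earlier propositions each transport one layer of structure --- vertices, edges, the topology on $E_\Lambda$, the coloring --- by $\pi$ in isolation, so the main obstacle is verifying that these transports are \emph{mutually} compatible: that the partition pushed forward by $\pi$ is exactly the one underlying the pushed-forward coloring, and that the range and source data of $(V(A), E_\Lambda, r, s)$ survive simultaneously with the coloring constraint defined by the $T_3$-representations. Since every individual equivalence is induced by the same map $\pi$ and its inverse $\pi^{-1}$, I anticipate this compatibility is automatic, but it is the one point at which one must check that no independent choices have been smuggled in; the remaining items are bookkeeping against the propositions already proved.
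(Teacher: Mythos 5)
Your proposal is correct and matches the paper's approach exactly: the paper offers no separate proof, introducing the theorem with ``Based on what we have so far,'' i.e., it is precisely the compilation of the preceding propositions and corollaries that you carry out, item by item, via the single isomorphism $\pi$. Your added remark on checking mutual compatibility of the transported structures in item (7) is a sensible refinement the paper leaves implicit, but it does not constitute a different route.
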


Notice that this can help distinguish between semicrossed products and tensor algebras for multivariable dynamics by way of the following result.

\begin{prop} A semicrossed product algebra is completely isometrically isomorphic to a tensor algebra if and only if the graph for the semicrossed product is $1$-colorable.
\end{prop}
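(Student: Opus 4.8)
The plan is to use the coloring invariant to detect the gap between the row-contractive structure of a tensor algebra and the merely entrywise-contractive structure of a semicrossed product. Throughout I would recall that $C(X)\rtimes_{\bar\alpha}\mathbb{F}_n^+$ is universal for covariant representations $(\rho,T_1,\dots,T_n)$ in which each $T_i$ is a contraction, whereas the tensor algebra of the same system is universal for those covariant representations in which the row $[\,T_1\ \cdots\ T_n\,]$ is a contraction. Since every row contraction has contractive entries, the identity on generators extends to a canonical completely contractive epimorphism from the semicrossed product onto the tensor algebra, and this map is a complete isometry exactly when the two universal norms agree.

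For the forward direction I would first show that the graph of any tensor algebra is $1$-colorable. The key observation is that two distinct edges with a common range $w$ must come from distinct generators: if $e_1,e_2$ arise from the same $\alpha_i$ then $w=\alpha_i(s(e_1))=\alpha_i(s(e_2))$ forces $s(e_1)=s(e_2)$ and hence $e_1=e_2$. With the normalization $t_\lambda(e_\mu)=\delta_{\lambda\mu}$ fixed in the construction, a representation of the branching form in the definition of a coloring would send the two generators to the matrix units $E_{12}$ and $E_{13}$, so the row $[\,E_{12}\ E_{13}\,]$ would have to be contractive. But this row has norm $\sqrt{2}$, contradicting the defining row-contractive relation. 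Hence no such $T_3$ representation exists, the constant function $f\equiv1$ is a valid coloring, and the graph is $1$-colorable. Since $1$-colorability of the edge-colored graph is a completely isometric invariant by the preceding propositions, any semicrossed product completely isometrically isomorphic to a tensor algebra must itself have a $1$-colorable graph.

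For the converse I would begin from $1$-colorability and produce the tensor algebra of the same system as the target. By the computation above, for distinct generators $\alpha_i,\alpha_j$ the representation on $\mathbb{C}^3$ with $\rho(f)=\mathrm{diag}(f(w),f(v),f(v'))$, $T_i=E_{12}$, $T_j=E_{13}$, and the remaining $T_k=0$ is entrywise contractive and satisfies the covariance relations whenever $\alpha_i(v)=w=\alpha_j(v')$; it is therefore a completely contractive representation of the semicrossed product realizing the branching $T_3$ form. Hence $1$-colorability forces that no two distinct generators ever share a value, i.e. the maps $\{\alpha_i\}$ have pairwise disjoint ranges (the case $n=1$ being vacuous). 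I would then argue that under this disjointness every entrywise-contractive covariant representation is automatically a row contraction, so the universal norms coincide and the canonical epimorphism is a complete isometry, identifying the semicrossed product with the tensor algebra.

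The main obstacle is this last step of the converse: upgrading the absence of the single $T_3$ branching configuration to the assertion that every entrywise-contractive covariant family, at every matrix level, is a row contraction. The $T_3$ condition only directly controls two edges into a common vertex inside a three-dimensional nest, so the work lies in propagating disjointness of ranges into a genuine row-contractive estimate $\sum_i T_iT_i^{*}\le I$ for arbitrary covariant families; this is where the structure theory of these algebras, together with care over non-injective $\alpha_i$ and over the unimodular ambiguity in the edge functionals, must be invoked. By contrast the forward direction reduces cleanly to the norm computation $\|[\,E_{12}\ E_{13}\,]\|=\sqrt{2}$ combined with the established invariance of the coloring.
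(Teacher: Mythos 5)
Your proposal is correct and follows the paper's proof almost step for step: the forward direction is the invariance of $1$-colorability together with the observation that tensor-algebra graphs are $1$-colorable (the paper merely asserts this; your computation $\|[\,E_{12}\ \ E_{13}\,]\| = \sqrt{2}$ is the substance behind the assertion), and the converse proceeds, exactly as in the paper, from $1$-colorability to disjointness of the ranges of the $\alpha_i$, and from there to coincidence of the two universal norms via the pair of canonical completely contractive maps composing to the identity on generators. The one place you diverge is in flagging the step ``disjoint ranges implies every contractive covariant family is a row contraction'' as a serious obstacle needing structure theory; this is precisely where the paper's one-line claim $S_iS_i^*S_jS_j^* = 0$ sits, and it closes in two lines rather than requiring any machinery. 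For $i \neq j$ the sets $\ran \alpha_i$ and $\ran \alpha_j$ are compact (closed, if one is in the locally compact setting with proper maps) and disjoint, so Urysohn provides a real-valued $f$ with $f \equiv 1$ on $\ran \alpha_i$ and $f \equiv 0$ on $\ran \alpha_j$; covariance gives $fT_i = T_i(f \circ \alpha_i) = T_i$ and $T_j(f\circ\alpha_j) = 0$, whence $T_i^{*}T_j = (fT_i)^{*}T_j = T_i^{*}fT_j = T_i^{*}T_j(f \circ \alpha_j) = 0$. Thus the range projections $P_i$ of the contractions $T_i$ are mutually orthogonal, so $\sum_i T_iT_i^{*} \le \sum_i P_i \le I$, which is the row estimate you wanted; since this is an operator inequality and the tensor algebra is by definition universal for row-contractive covariant representations, complete contractivity at all matrix levels comes for free, and injectivity of the $\alpha_i$ never enters (only disjointness of the ranges does, and your $T_3$ computation also shows the branching representation is never contractive within a single generator, since $\|E_{12}+E_{13}\| = \sqrt{2}$, so non-injectivity causes no coloring pathology either). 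In short, your outline is the paper's proof, and the gap you reserve for last is genuinely a short Urysohn argument rather than an appeal to the structure theory of these algebras.
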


\begin{proof} For one direction of this notice that the graph for tensor algebras is always $1$-colorable.  For the converse notice that if the graph associated to the semicrossed product $C(X) \rtimes_{\alpha} \mathbb{F}_n^+$ is $1$-colorable then the range of each of the edges must be disjoint.  Specifically this tells us that $S_iS_i^* S_jS_j^* = 0$ for each of the generating isometries $S_i$.  It follows that $ \sum S_iS_i^* \leq 1$ which is the universal property for the tensor algebra.  Hence there is a completely contractive map from the tensor algebra onto the semicrossed product.  The reverse map is universal and hence the two coincide.
\end{proof}

Notice that in this result completely isometrically isomorphic is the best we can hope for.  Specifically there are semicrossed products and tensor algebras which are isomorphic as Banach algebras but not as operator algebras as the next example illustrates.

\begin{example} We consider the the four point set $ \{ 1, 2, 3, 4 \}$ and the pair of maps $ f(1) = 2, f(2) = f(3) = f(4) =3$ and $g(1) = 2, g(2) = g(3) = g(4) = 4$.  Notice that the graph for $A(X, \{ f, g \})$ and the graph for $C(X) \rtimes_{\{ f, g \}} \mathbb{F}^+_2$ are both equal to \[ \xymatrix{ & & {\bullet} \ar@(r,dr) \ar@/^/[dd] \\ {\bullet} \ar@/^/[r] \ar@/_/[r] & {\bullet} \ar[ur] \ar[dr] & \\ & & {\bullet} \ar@(r,ur) \ar@/^/[uu]}.\]
In fact if we denote this graph by $G$ the $A(X, \{ f, g \})$ is the directed graph algebra $A(G)$, and $C(X) \rtimes_{\{ f, g \}} \mathbb{F}^+_2$ is the edge colored directed graph algebra $A(G, \chi)$ where $\chi(e_i) = \begin{cases} 1 \mbox{ if } i \mbox{ is odd} \\ 2 \mbox{ if } i \mbox{ is even} \end{cases}.$

Using \cite{Duncanedge} we can see that $A(G) = \left( A(G') \oplus \mathbb{C} \right) \free{\mathbb{C}^4} \left( A' \oplus \mathbb{C}^2 \right)$ where $G'$ is the graph \[ \xymatrix{ & & {\bullet} \ar@(r,dr) \ar@/^/[dd] \\ {\bullet} \ar@/^/[r] \ar@/_/[r] & {\bullet} \ar[ur] \ar[dr] & \\ & & {\bullet} \ar@(r,ur) \ar@/^/[uu]}.\] and $A$ is the algebra \[ \left\{ \begin{bmatrix} \lambda_1 & 0 & 0 \\ \lambda_2 & \lambda_3 & 0 \\ \lambda_4 & 0 & \lambda_3\end{bmatrix} : \lambda_i \in \mathbb{C} \right\}. \]

Similarly $A(G, \chi)$ is given as a free product of the form $ \left( A(G') \oplus \mathbb{C} \right) \free{\mathbb{C}^4} \left( T^2 \oplus \mathbb{C}^2\right)$.  Identifying the subalgebras given by $A(G')$ and noticing that $P_2+P_3+P_4 X P_2+P_3+P_4 = X$ for all $X \in A(G')$ (this is because the equation holds for all of the generators of $A(G')$) we can write every element of $A(G, \chi)$ in the form $ \lambda + T_1 X + T_2Y + Z$ where $T_i$ is the partial isometry associated to the edges $e_1$ and $e_2$ in $G$ and $X, Y, $ and $Z$ are all in $A(G')$.  A similar construction for $A(G)$ gives us an algebraic isomorphism between $A(G, \chi)$ and $A(G)$.  But in this context algebraic isomorphism implies continuous isomorphism \cite{DavidsonKatsoulis} and hence the two algebras are isomorphic as Banach algebras.  However they are not completely isometrically isomorphic since the second algebra gives rise to an edge-colored directed graph which is not $1$-colorable.
\end{example}

\bibliographystyle{plain}

\end{document}